\newtheorem{theorem}{Theorem}[section] 
\newtheorem{cor}[theorem]{Corollary}
\newtheorem{proposition}[theorem]{Proposition}
\theoremstyle{definition}
\newtheorem{example}{Example}
\theoremstyle{remark}
\newtheorem{rem}[theorem]{Remark}
\numberwithin{equation}{section}
\newcommand{\ff}{\mathbb{F}}
\newcommand{\m}{\mathcal}
\renewcommand{\P}{\mathbb{P}}
\renewcommand{\deg}[1]{deg\,#1}
\newcommand{\diff}{\operatorname{Diff}}
\newcommand{\supp}{\operatorname{supp}}
\begin{document}

\title{Asymptotically bad towers of function fields}%
\author{M. Chara\footnote{IMAL (UNL-CONICET), Colectora Ruta Nac. N° 168 km. 472, Paraje El Pozo, (3000) Santa Fe, Argentina. {\it e-mail: mchara@santafe-conicet.gov.ar}}\quad R. Toledano\footnote{IMAL (UNL-CONICET) and FIQ (UNL), Departamento de Matem\'atica, Facultad de Ingenier\'ia Qu\'imica, Stgo. del Estero 2829, (3000) Santa Fe, Argentina. {\it e-mail: rtoledano@santafe-conicet.gov.ar}}
}

\maketitle

\date{}%
\begin{abstract}
In this paper we study general conditions to prove the infiniteness of the genus of certain towers of function fields over a perfect field. We show that many known examples of towers with infinite genus are particular cases of these conditions.

\bigskip
\noindent{\it Key words: Function fields, Towers, Genus, Asymptotic behavior}

\bigskip
\noindent{\it 2000 Mathematical Subject Classification: 11R58, 11G20, 14H05}

\end{abstract}

\section{Introduction} 
\label{intro}

The aim of this paper is to study, on the one hand, general sufficient conditions to prove the infiniteness of the genus of certain towers of function fields over a perfect field  and, on the other hand, to give some criteria to construct concrete examples of towers with infinite genus. We will work in the general setting of towers $\m{F}=(F_0, F_1, \ldots)$ of function fields $F_i$ over a perfect field $K$  with some additional properties (see Section \ref{notanddef} for precise definitions). Roughly speaking the type of towers we will consider in this paper are towers in the sense of Garcia and Stichtenoth (see Chapter $7$ of \cite{Stichbook09}) without asking the condition that $g(F_i)$, the genus of $F_i$, tends to infinity as $i$ tends to infinity. The reason in doing so is that there is no need of this condition when proving that a given tower has infinite genus. Once this is proved we have that  $g(F_i)$ must tend to infinity as $i$ tends to infinity so that the considered tower is, in fact, a tower in the sense of Garcia and Stichtenoth which is, additionally, asymptotically bad.

In the case of recursive towers (i.e. each extension $F_{i+1}/F_i$ is defined by using some of the roots of a bivariate polynomial with coefficients in $K$) there is a simple sufficient condition for bad asymptotic behavior, namely  that the recursive tower is skew in the sense that the degree of the polynomial defining the tower is not the same in each variable (see \cite{GS07} for details). Unfortunately this condition is not necessary because there are examples of non skew asymptotically bad towers. Interestingly, when a tower is non skew, it seems equally hard to prove that the tower is asymptotically good or asymptotically bad.


The organization of the paper is as follows. In Section \ref{notanddef} we give the basic definitions and we establish the notation to be used throughout the paper. In Section \ref{bad} we prove our main results in the general setting of sequences of function fields over a perfect field $K$ and in Section \ref{examples} we give  examples of
asymptotically bad towers showing on the one hand that many known examples are particular cases of our general results and, on the other hand, that all can be obtained in a unified way. The key in this general and unified treatment of the bad asymptotic behavior of towers is the existence of a particular divisor which is far from being obvious in some of the given examples.

\section{Notation and Definitions}\label{notanddef}
In this work we shall be concerned with \emph{ towers} which mean that $\m{F}=(F_0, F_1, \ldots)$ is an infinite sequence of function fields over $K$ where for each index $i\geq 0$ the field $F_i$ is a proper subfield of $F_{i+1}$, the field extension $F_{i+1}/F_i$ is finite and separable and $K$ is the full field of constants of each field $F_i$ (i.e. $K$ is algebraically closed in each $F_i$). If the genus $g(F_i)\rightarrow \infty$ as $i\rightarrow \infty$ we shall say that $\m{F}$ is a {\em tower in the sense of Garcia and Stichtenoth}.

Following \cite{Stichbook09}  (see also \cite{GS07}), one way of constructing towers of function fields over $K$ is by giving a bivariate polynomial
\[H\in K[X,Y]\,,\]
 and a transcendental element $x_0$ over $K$. In this situation a tower $\mathcal{F}=(F_0, F_1, \ldots)$ of function fields over $K$ is defined as
 \begin{enumerate}[(i)]
 \item $F_0=K(x_0)$, and
 \item $F_{i+1}=F_i(x_{i+1})$ where $H(x_i,x_{i+1})=0$ for $i\geq 0$.
 \end{enumerate}
 A suitable choice of the bivariate polynomial $H$ must be made in order to have towers. When the choice of $H$ satisfies all the required conditions we shall say that the tower $\mathcal{F}$ constructed in this way is a {\em recursive tower} of function fields over $K$. Note that for a recursive tower $\mathcal{F}=(F_0, F_1, \ldots)$ of function fields over $K$ we have that
  \[F_i=K(x_0,\ldots,x_i)\qquad \text{for }i\geq 0,\]
where $\{x_i\}_{i=0}^{\infty}$ is a sequence of transcendental elements over $K$.

Associated to a recursive tower  $\mathcal{F}=(F_0, F_1, \ldots)$ of function fields $F_i$  over $K$ we have the so called {\em basic function field} $K(x,y)$ where $x$ is transcendental over $K$ and $H(x,y)=0$.

For the sake of simplicity we shall say from now on that $H$ defines the tower $\m{F}$ or, equivalently, that  tower $\m{F}$ is recursively defined by the equation $H(x,y)=0$.

A tower $\m{F}=(F_0, F_1, \ldots)$ of function fields over a perfect field $K$ of positive characteristic is called \emph{tame} if the ramification index $e(Q|P)$ of any place $Q$ of $F_{i+1}$ lying above a place $P$ of $F_i$ is relatively prime to the characteristic of $K$ for all $i\geq 0$. Otherwise the tower $\m{F}$ is called \emph{wild}.

 The set of places of a function field $F$ over $K$ will be denoted by $\P(F)$.

The following definitions are important when dealing with the asymptotic behavior of a tower. Let $\mathcal{F}=(F_0, F_1, \ldots)$ be a  tower of function fields
over a finite field $\ff_q$ with $q$ elements.
The {\em splitting rate} $\nu(\m{F})$ and the {\em genus} $\gamma(\m{F})$ of $\m{F}$ over $F_0$ are defined, respectively, as
$$\nu(\m{F})\colon=\lim_{i\rightarrow \infty}\frac{N(F_i)}{[F_i:F_0]}\,, \qquad\gamma(\m{F})\colon=\lim_{i\rightarrow \infty}\frac{g(F_i)}{[F_i:F_0]}\,.$$
If $g(F_i)\geq 2$
for $i\geq i_0\geq 0,$ the {\em limit} $\lambda(\m{F})$ of $\m{F}$ is defined as $$\lambda(\m{F})\colon=\lim_{i\rightarrow \infty}\frac{N(F_i)}{g(F_i)}\,.$$
It can be seen that all the above limits exist and that $\lambda(\m{F})\geq 0$ (see \cite[Chapter 7]{Stichbook09}).

We shall say that a tower $\mathcal{F}=(F_0, F_1, \ldots)$ of function fields over $\ff_q$ is {\em asymptotically good} if $\nu(\m{F})>0$ and $\gamma(\m{F})<\infty$. If either $\nu(\m{F})=0$ or $\gamma(\m{F})=\infty$ we shall say that $\m{F}$ is {\em asymptotically bad}.

From the well-known Hurwitz genus formula
(see \cite[Theorem 3.4.13]{Stichbook09}) we see that the condition $g(F_i)\geq 2$ for $i\geq i_0$ in the definition of $\lambda(\m{F})$ implies that $g(F_i)\rightarrow \infty$ as $i\rightarrow \infty$.
Hence, when  we speak of the limit of a tower of function fields we are actually speaking of the limit of a tower in the sense of Garcia and Stichtenoth (see \cite[Section 7.2]{Stichbook09}).

It is easy to check that in the case of a tower $\m{F}$ we have that $\m{F}$ is asymptotically good if and only if $\lambda(\m{F})>0$. Therefore a tower $\m{F}$ is  asymptotically bad if and only if $\lambda(\m{F})=0$.

\section{Bad towers}\label{bad}

As was mentioned in the introduction, a simple and useful condition implying that $H \in \ff_q[x,y]$ does not give rise to an asymptotically good recursive tower  $\m{F}$ of function fields over $\ff_q$ is that $\deg_xH\neq\deg_yH$. With this situation in mind
we shall say that a recursive tower $\m{F}=(F_0, F_1, \ldots)$ of function fields over a perfect field $K$ defined by a polynomial $H \in K[x,y]$ is {\em non skew} if $\deg_xH=\deg_yH$. In the skew case (i.e. $\deg_xH\neq \deg_yH$) we might have that $[F_{i+1}:F_i]\geq 2$ for all $i\geq 0$ and
even that $g(F_i)\rightarrow \infty$ as $i\rightarrow\infty$ but, nevertheless, $\m{F}$ will be asymptotically bad. What happens is that if $\deg_yH>\deg_xH$ then
the splitting rate  $\nu(\m{F})$ is zero (this situation makes sense in the case $K=\ff_q)$ and if  $\deg_xH>\deg_yH$ the genus $\gamma(\m{F})$ is infinite (see \cite{GS07} for details). Therefore the search for conditions for bad asymptotic behavior must be focused on non skew towers.

Since nothing else on the polynomial $H$ seems to help to decide if a tower defined by $H$ is asymptotically bad, the natural step in the search for conditions for bad asymptotic behavior of a tower $\m{F}=(F_0, F_1, \ldots)$ is to look for upper bounds for $N(F_i)$ or lower bounds for $g(F_i)$ with the hope of proving that the given tower has either zero splitting rate or infinite genus. In the present work we will focus in the latter case. The starting point in all the known results in this direction is the following proposition. From now on $K$ will denote a perfect field and we recall that $K$ is assumed to be the full  field of constants of each function field $F_i$ of any given tower $\m{F}$ over $K$.

\begin{proposition}\label{propgenusdiff}
Let $\m{F}=(F_0, F_1, \ldots)$ be a tower of function fields over $K$. Suppose that there is a subtower $\m{F}'=(F_{s_1}, F_{s_2}, \ldots)$
such that the series
\begin{equation}\label{divergentseries}
\sum_{i=1}^\infty
\frac{\deg{\diff(F_{s_i}/F_{s_{i-1}})}}{[F_{s_i}:F_0]}\,,
\end{equation}
 is divergent. Then $\gamma(\m{F})=\infty$ which implies that $\m{F}$ is an asymptotically bad tower of function fields over $K$. Reciprocally, if $\gamma(\m{F})=\infty$ then the series \eqref{divergentseries} is divergent for any subtower $(F_{s_1}, F_{s_2}, \ldots)$ of $\m{F}$. In particular, if $\m{F}$ is an asymptotically bad tower of function fields over $\ff_q$ and $\nu(\m{F})>0$ then the series \eqref{divergentseries} is divergent for any subtower $(F_{s_1}, F_{s_2}, \ldots)$.
\end{proposition}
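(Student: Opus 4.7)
The plan is to reduce everything to the Hurwitz genus formula applied to each step of the subtower and to the subtower inside the full tower.

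First I would apply the Hurwitz genus formula to each extension $F_{s_i}/F_{s_{i-1}}$ (which is separable since each step $F_{j+1}/F_j$ is), obtaining
\[
2g(F_{s_i})-2 \;=\; [F_{s_i}:F_{s_{i-1}}]\bigl(2g(F_{s_{i-1}})-2\bigr) + \deg\diff(F_{s_i}/F_{s_{i-1}}).
\]
Dividing both sides by $[F_{s_i}:F_0]=[F_{s_i}:F_{s_{i-1}}]\cdot[F_{s_{i-1}}:F_0]$ yields
\[
\frac{2g(F_{s_i})-2}{[F_{s_i}:F_0]} \;=\; \frac{2g(F_{s_{i-1}})-2}{[F_{s_{i-1}}:F_0]} \;+\; \frac{\deg\diff(F_{s_i}/F_{s_{i-1}})}{[F_{s_i}:F_0]}.
\]
Telescoping from $i=1$ to $i=n$ gives the fundamental identity
\[
\frac{2g(F_{s_n})-2}{[F_{s_n}:F_0]} \;=\; \frac{2g(F_{s_0})-2}{[F_{s_0}:F_0]} \;+\; \sum_{i=1}^{n}\frac{\deg\diff(F_{s_i}/F_{s_{i-1}})}{[F_{s_i}:F_0]}.
\]

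Next I would use the fact (Chapter 7 of \cite{Stichbook09}) that $g(F_i)/[F_i:F_0]$ converges in $[0,\infty]$ as $i\to\infty$, so that its subsequential limit along $\{s_n\}$ equals $\gamma(\m{F})$. Letting $n\to\infty$ in the identity above, the first term on the right is a finite constant, so the left side tends to $2\gamma(\m{F})$ precisely when the partial sums of the series \eqref{divergentseries} converge. Hence divergence of \eqref{divergentseries} forces $\gamma(\m{F})=\infty$, which by the equivalence $\lambda(\m{F})>0 \Leftrightarrow \m{F}$ asymptotically good (recorded at the end of Section \ref{notanddef}) implies that $\m{F}$ is asymptotically bad. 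Conversely, if $\gamma(\m{F})=\infty$, then the left side of the identity tends to infinity while the constant term is finite, so the series \eqref{divergentseries} must diverge for every choice of subtower.

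Finally, for the last assertion I would argue as follows. Assume $\m{F}$ is a tower over $\ff_q$ with $\nu(\m{F})>0$ and $\lambda(\m{F})=0$. Since
\[
\lambda(\m{F}) \;=\; \lim_{i\to\infty}\frac{N(F_i)}{g(F_i)} \;=\; \lim_{i\to\infty}\frac{N(F_i)/[F_i:F_0]}{g(F_i)/[F_i:F_0]}\;=\;\frac{\nu(\m{F})}{\gamma(\m{F})},
\]
the hypotheses $\nu(\m{F})>0$ and $\lambda(\m{F})=0$ force $\gamma(\m{F})=\infty$, and then the converse direction just established yields divergence of \eqref{divergentseries} along every subtower.

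The only place that requires care is the passage to the limit along the subsequence $\{s_n\}$ and the justification that $g(F_i)/[F_i:F_0]$ has a (possibly infinite) limit; this is standard from the telescoped Hurwitz identity applied to the full tower and can simply be cited from \cite[Chapter 7]{Stichbook09}. Beyond that the argument is purely a matter of rearranging the Hurwitz formula and interpreting the resulting telescoping sum.
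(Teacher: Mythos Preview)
Your proposal is correct and follows essentially the same route as the paper: apply the Hurwitz genus formula to each step $F_{s_i}/F_{s_{i-1}}$, normalize by $[F_{s_i}:F_0]$, telescope (the paper phrases this as an induction), and pass to the limit using that $g(F_n)/[F_n:F_0]$ converges in $[0,\infty]$. The only minor remark is that for the final assertion you can argue more directly from the paper's definition of ``asymptotically bad'' (either $\nu(\m{F})=0$ or $\gamma(\m{F})=\infty$), so $\nu(\m{F})>0$ immediately forces $\gamma(\m{F})=\infty$ without passing through $\lambda(\m{F})$.
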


\begin{proof}

The proposition follows easily from the fact that the genus $\gamma(\m{F})$ of the tower $\m{F}$ can be written as \begin{equation}\label{ecu1paper3}\gamma(\m{F})=
\lim_{i \rightarrow \infty}\frac {g(F_{s_i})}{[F_{s_i}:F_0]}=g(F_0)-1+\frac 1 2 \sum_{i=1}^\infty
\frac{\deg{\diff(F_{s_{i+1}}/F_{s_i})}}{[F_{s_{i+1}}:F_0]}.\end{equation} This can be easily proved by induction. In fact, using Hurwitz genus formula (see \cite[Theorem 3.4.13]{Stichbook09}) for the extension $F_{s_1}/F_0$ we have that
 $$2(g(F_{s_1})-1)=[F_{s_1}:F_0](2g(F_0)-2)+\deg{\diff(F_{s_1}/F_0)}$$ and hence
$$\frac{g(F_{s_1})-1}{[F_{s_1}:F_0]}=g(F_0)-1+\frac{1}{2} \frac{\deg{\diff(F_{s_1}/F_0)}}{[F_{s_1}:F_0]}.$$
Now suppose that
$$\frac{g(F_{s_i})-1}{[F_{s_i}:F_0]}=g(F_0)-1+\frac{1}{2} \sum_{j=1}^{i}\frac{\deg{\diff(F_{r_i}/F_{r_{j-1}})}}{[F_{r_i}:F_0]},$$ where
$F_{r_0}$ denote $F_0$. Then by Hurwitz genus formula for the extension $F_{r_{i+1}}/F_{s_i}$ we have
$$2(g(F_{r_{i+1}})-1)=[F_{r_{i+1}}:F_{s_i}](2g(F_{s_i})-2)+\deg{\diff(F_{r_{i+1}}/F_{s_i})}$$ and therefore
\begin{align*}
  \frac{g(F_{r_{i+1}})-1}{[F_{r_{i+1}}:F_0]}&=\frac{g(F_{s_i})-1}{[F_{s_i}:F_0]}+\frac{1}{2}\frac{\deg{\diff(F_{r_{i+1}}/F_{s_i})}}{[F_{r_{i+1}}:F_0]}\\
  &=g(F_0)-1+\frac{1}{2} \sum_{j=1}^i\frac{\deg{\diff(F_{r_i}/F_{r_{j-1}})}}{[F_{r_i}:F_0]}+\frac{1}{2}\frac{\deg{\diff(F_{r_{i+1}}/F_{s_i})}}{[F_{r_{i+1}}:F_0]}\\
  &=g(F_0)-1+\frac{1}{2} \sum_{j=1}^{i+1}\frac{\deg{\diff(F_{s_i}/F_{r_{j-1}})}}{[F_{s_i}:F_0]}.
\end{align*}

Since the limit ${g(F_n)}/{[F_n:F_0]}$ always exists and $[F_{r_{i+1}}:F_0]\rightarrow \infty$ as $i\rightarrow \infty$, we have that
\begin{align*}
\gamma(\m{F})&=\lim_{n \rightarrow \infty}\frac {g(F_n)}{[F_n:F_0]}\\
&=\lim_{n \rightarrow \infty}\frac {g(F_n)-1}{[F_n:F_0]}\\
&=\lim_{i \rightarrow \infty}\frac {g(F_{s_i})-1}{[F_{s_i}:F_0]}\\
&=g(F_0)-1+\frac{1}{2}\lim_{i \rightarrow \infty} \sum_{j=1}^{i+1}\frac{\deg{\diff(F_{s_i}/F_{r_{j-1}})}}{[F_{s_i}:F_0]}\\
&=g(F_0)-1+\frac 1 2 \sum_{j=1}^\infty
\frac{\deg{\diff(F_{s_i}/F_{r_{j-1}})}}{[F_{s_i}:F_0]},
\end{align*}
which is the desired result.
Finally, if $\gamma(\m{F})=\infty$ the reciprocal result follows easily from \eqref{ecu1paper3}.
\end{proof}

Proposition~\ref{propgenusdiff} was used in \cite{GS96},\cite{MaWu05}, \cite{BGS04} and \cite{BeelenGS05} where the authors give different conditions to construct non skew asymptotically bad towers. However not stated explicitly in the just mentioned articles, the infiniteness of the genus of the given towers is deduced from the following remark.

\begin{rem}\label{remarkdivisor}
Suppose that  $\m{F}=(F_0, F_1, \ldots)$ is a tower of function fields over  $K$ and that there exist positive
functions  $c_1(t)$ and $c_2(t)$, defined for $t\geq 0$, and  a divisor  $B_i\in \m{D}(F_i)$ such that for each $i\geq 1$
 \begin{enumerate}[(a)]
\item $\deg B_i\geq c_1(i)[F_i:F_0]$ and\label{thm3.2-a}
 \item $\sum\limits_{P\in supp(B_i)}\sum\limits_{Q|P}d(Q|P)\deg Q\geq c_2(i)[F_{{i+1}}\colon F_i]\deg{B_i}\,,$ \label{thm3.2-b}
 \end{enumerate}
where the inner sum runs over all places $Q$ of $F_{i+1}$ lying above $P$, then it is easy to see that if the series
\begin{equation}\label{thm3.2-c}
  \sum_{i=1}^{\infty}c_1(i)c_2(i)
\end{equation} is divergent then $\gamma(\m{F})=\infty$.
\end{rem}

\begin{cor}\label{coroTeoclasificacion1}
  With the same hypotheses as in Remark~\ref{remarkdivisor}, if in addition $\m{F}=(F_0, F_1, \ldots)$ is non skew and  recursively defined by the equation $H(x,y)=0$ such that $H(x,y)$, as a polynomial with coefficients in $K(y)$, is irreducible in $K(y)[x]$
  then condition \eqref{thm3.2-a}
  can be replaced by the following
\begin{enumerate}[(a')]
\item $\deg{B_{j}}\geq c_1(j)\cdot \deg(b(x_{j}))^{j}$ where $b\in K(T)$ is a rational function and $(b(x_{j}))^{j}$ denotes either the pole divisor or the zero divisor of $b(x_{j})$ in $F_{j}$,\label{thm3.2-a'}
\end{enumerate}
and the same result hold, i.e., $\gamma(\m{F})=\infty$.
\end{cor}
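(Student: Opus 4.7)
The plan is to reduce condition (a$'$) to condition (a) of Remark~\ref{remarkdivisor} up to a positive multiplicative constant and then apply that remark verbatim. First I would rewrite $\deg(b(x_j))^j$ in terms of more familiar invariants. Since the zero and pole divisors of any nonzero element of a function field have the same degree, equal to the field index of the rational subfield it generates, one has
\[
\deg(b(x_j))^j \;=\; [F_j : K(b(x_j))] \;=\; [F_j : K(x_j)] \cdot [K(x_j) : K(b(x_j))] \;=\; [F_j : K(x_j)] \cdot \deg(b),
\]
where $\deg(b)$ is the degree of $b$ as a rational function in $K(T)$.

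The second step is to show that under the non-skew and irreducibility hypotheses one has $[F_j : K(x_j)] = [F_j : F_0]$. Setting $d = \deg_x H = \deg_y H$, the value $[F_j : F_0] = d^j$ follows from the fact that $H(x_i, Y)$ is irreducible of degree $d$ over $F_i$ at each step of the tower. For the reverse equality I would consider the reversed chain
\[
K(x_j) \;\subset\; K(x_{j-1},x_j) \;\subset\; \cdots \;\subset\; K(x_0,\ldots,x_j) \;=\; F_j,
\]
in which each step adjoins $x_{k-1}$ to $K(x_k,\ldots,x_j)$ as a root of $H(X, x_k)$, a polynomial of degree $d$ in $X$. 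The irreducibility of $H$ in $K(y)[x]$ yields irreducibility of $H(X, x_k)$ over $K(x_k)$, and the symmetric role of $x$ and $y$ under the non-skew hypothesis gives irreducibility over the larger field $K(x_k,\ldots,x_j)$ as well, so each step has degree exactly $d$ and $[F_j : K(x_j)] = d^j$.

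Combining these computations, $\deg(b(x_j))^j = \deg(b) \cdot [F_j : F_0]$, so condition (a$'$) becomes $\deg B_j \geq \bigl(c_1(j)\deg(b)\bigr) \cdot [F_j : F_0]$, which is precisely condition (a) of Remark~\ref{remarkdivisor} with $c_1$ rescaled by the positive constant $\deg(b)$. Such a rescaling preserves the divergence of $\sum_i c_1(i) c_2(i)$, so Remark~\ref{remarkdivisor} applies and yields $\gamma(\m{F}) = \infty$. The main obstacle I anticipate lies in the irreducibility claim for the reversed chain: the hypothesis gives irreducibility of $H(X, x_k)$ only over $K(x_k)$, and one must carefully argue that the algebraic extension $K(x_k,\ldots,x_j)/K(x_k)$ contains no new roots of this polynomial. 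This requires exploiting the symmetric non-skew structure of $\m{F}$ together with its assumed well-definedness as a tower of degree $d$ at every step.
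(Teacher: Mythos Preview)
Your reduction is precisely the intended argument (the paper states the corollary without proof, and this is the only natural route): rewrite $\deg(b(x_j))^j = [F_j:K(x_j)]\cdot\deg b$, identify $[F_j:K(x_j)]$ with $[F_j:F_0]$, and apply Remark~\ref{remarkdivisor} with $c_1$ rescaled by the positive constant $\deg b$.

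The obstacle you flag in the last paragraph is genuine, but it closes by a degree count rather than by establishing irreducibility over the larger field directly. Set $E_i = K(x_i,\ldots,x_j)$ and $d=\deg_xH=\deg_yH$. The recursive-tower hypothesis gives $[F_j:F_0]=d^j$, and the irreducibility of $H$ in $K(y)[x]$ gives $[K(x_0,x_1):K(x_1)]=d$. Then
\[
d^j \;=\; [F_j:F_1]\cdot[F_1:K(x_1)] \;=\; [F_j:K(x_1)] \;=\; [F_j:E_1]\cdot[E_1:K(x_1)] \;\le\; d\cdot d^{j-1},
\]
the two bounds on the right coming from $F_j=E_1(x_0)$ with $\deg_xH=d$ and from the forward chain $K(x_1)\subset K(x_1,x_2)\subset\cdots\subset E_1$ with $\deg_yH=d$. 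Equality forces $[F_j:E_1]=d$ and $[E_1:K(x_1)]=d^{j-1}$. The latter in turn forces every forward step from $K(x_1)$ to have degree exactly $d$; in particular $[K(x_1,x_2):K(x_1)]=d$, so $K(x_1,x_2)$ is isomorphic to the basic function field and hence $[K(x_1,x_2):K(x_2)]=d$ by the hypothesis on $H$. Now repeat the same computation with $E_1,K(x_1),K(x_2)$ in place of $F_j,K(x_0),K(x_1)$ to obtain $[E_1:E_2]=d$ and $[E_2:K(x_2)]=d^{j-2}$, and iterate. This yields $[F_j:K(x_j)]=\prod_{i=0}^{j-1}[E_i:E_{i+1}]=d^j=[F_j:F_0]$, and your reduction goes through.
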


\begin{rem}\label{remteoclasi}
It is easy to see that Corollary~\ref{coroTeoclasificacion1} also hold if the conditions are expressed in terms of a subtower $\m{F}'=(F_{s_1}, F_{s_2}, \ldots)$ of $\m{F}$.
\end{rem}

  In the wild case the infiniteness of the genus of a tower $\m{F}=(F_0, F_1, \ldots)$ over $K$ can be proved by showing that for infinitely many indices $i\geq 1$ there is a place $Q$ of $F_{i+1}$ such that  the different exponent $d(Q|P)$ satisfies
\begin{equation}\label{diffexplowerbound}
d(Q|P)\geq c_i[F_i:F_0]\,,
\end{equation}
where $P$ is the place of $F_i$ lying under $Q$ and $c_i>0$ is such that the series
\begin{equation}\label{seriesdiffexp}
\sum_{i=1}^{\infty}\frac{c_i}{[F_{i+1}:F_i]}\,,
\end{equation}
is divergent. In this case it is easy to check that the divisor \[B_i=[F_i:F_0]P\in \m{D}(F_i)\,,\] satisfies conditions \eqref{thm3.2-a} and \eqref{thm3.2-b} of Remark \ref{remarkdivisor} with $c_1(i)=c_i$ and $c_2(i)=[F_{i+1}:F_i]^{-1}$. An example of the situation just described was given in \cite{GS96} where the authors showed that if a tower $\m{F}=(F_0, F_1, \ldots)$ of function fields over $\ff_2$ is recursively generated by the equation
\[y^3+y=\frac{x^3}{x+1}\,,\]
then there is a place $Q$ of $F_{i+1}$ such that
\[d(Q|P)=5\cdot 3^{i-1}-1\geq \frac{5}{6}[F_i:F_0]\,,\]
where $P$ is the place of $F_i$ lying under $Q$. In this case \eqref{seriesdiffexp} holds because $[F_{i+1}:F_i]=3$ for all $i\geq 0$.

In other cases  the infiniteness of the genus of a tower $\m{F}=(F_0, F_1, \ldots)$ over $K$  can be proved by showing that for some subtower $\m{F}'=(F_{s_1}, F_{s_2}, \ldots)$ of $\m{F}$ sufficiently many places of $F_{s_i}$ are ramified in  $F_{s_{i+1}}$ in the  sense that the number $r_i=\#(R_i)$ where
\[R_i=\{P\in\P(F_{s_i})\,:\,\text{$P$ is ramified in $F_{s_{i+1}}$}\}\,.\] satisfies the estimate
\[r_i\geq c_i[F_{s_{i+1}}:F_0]\,,\]
where $c_i>0$ for $i\geq 1$ and the series
$\sum_{i=1}^{\infty}c_i$
is divergent. It is easily seen that the divisor of $F_{s_i}$
\[B_i=\sum_{P\in R_i}P\,,\]
satisfies the conditions \eqref{thm3.2-a} and \eqref{thm3.2-b} of Remark \ref{remarkdivisor}
with $c_1(i)= c_i[F_{s_{i+1}}:F_{s_i}]$ and $c_2(i)=[F_{s_{i+1}}:F_{s_i}]^{-1}$.

An example of the situation just described  is given in \cite{BeelenGS05}. The authors consider a recursive tower $\m{F}=(F_0, F_1, \ldots)$ and its dual tower $\m{G}=(G_0, G_1, \ldots)$ with $F_0=G_0$. They show that if $\m{G}$ is a tower and there is a place $P$ of $F_0$ such that
\[\sup_{i\geq 1}\{e(Q|P)\,:\,Q\in\P(F_i)\}\neq \sup_{i\geq 1}\{e(Q|P)\,:\,Q\in\P(G_i)\}\,,\]
then there exist a subtower  $\m{F}'=(F_{s_1}, F_{s_2}, \ldots)$ of $\m{F}$ and  positive integers $k$ and $n$ such that $s_{i+1}=s_i+k$ and at least $m^{-k}[F_{s_{i+1}}:F_0]$ places of $F_{s_i}$ are ramified in  $F_{s_{i+1}}$ where $m$ is the degree of each extension $F_{i+1}/F_i$.

With additional hypothesis, in \cite{CTcubic}, the following result was proved. 
\begin{proposition}\label{propmajwulf} Let $\m{F}=(F_0, F_1, \ldots)$ be  a tower of function fields over $K$. Suppose that either each extension $F_{i+1}/F_i$ is Galois or that there exists a constant $M$ such that $[F_{i+1}:F_i]\leq M$ for $i\geq 0$. In order to have infinite genus it suffices to find, for infinitely many indices $i\geq 1$, a place $P_i$ of $F_0$ unramified in $F_i$ and such that each place of $F_i$ lying above $P_i$ is ramified in $F_{i+1}$.
\end{proposition}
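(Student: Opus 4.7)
The plan is to apply Proposition~\ref{propgenusdiff} directly to the full tower (taking $s_i=i$) by showing that for each of the infinitely many \emph{good} indices $i$ the corresponding term of the series \eqref{divergentseries} is bounded below by a fixed positive constant; the series then diverges and the proposition yields $\gamma(\m{F})=\infty$. Fix one such good $i$ together with its associated place $P_i$ of $F_0$. Since $P_i$ is unramified in $F_i$, the fundamental equality collapses to $\sum_{Q\mid P_i}\deg Q=[F_i:F_0]\deg P_i\geq[F_i:F_0]$, and by hypothesis every such $Q$ ramifies in $F_{i+1}$.

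In the Galois case, all places $Q'$ of $F_{i+1}$ lying above a given $Q\mid P_i$ share a common ramification index $e_Q\geq 2$, residue degree $f_Q$, and number $g_Q$, with $e_Qf_Qg_Q=[F_{i+1}:F_i]$. Dedekind's different inequality $d(Q'\mid Q)\geq e_Q-1$ together with $\deg Q'=f_Q\deg Q$ gives
$$\sum_{Q'\mid Q}d(Q'\mid Q)\deg Q'\;\geq\;\frac{e_Q-1}{e_Q}[F_{i+1}:F_i]\deg Q\;\geq\;\tfrac12[F_{i+1}:F_i]\deg Q,$$
and summing over $Q\mid P_i$ produces $\deg{\diff(F_{i+1}/F_i)}\geq\tfrac12[F_{i+1}:F_0]$. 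In the bounded case $[F_{i+1}:F_i]\leq M$, for each $Q\mid P_i$ one picks a single place $Q^{\ast}\mid Q$ with $e(Q^{\ast}\mid Q)\geq 2$, so $d(Q^{\ast}\mid Q)\deg Q^{\ast}\geq\deg Q$; summing yields $\deg{\diff(F_{i+1}/F_i)}\geq[F_i:F_0]\geq[F_{i+1}:F_0]/M$.

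In either case the relevant term is at least $c>0$, with $c=1/2$ or $c=1/M$, and since this happens for infinitely many $i$ the series \eqref{divergentseries} diverges. The only genuine subtlety is the bounded-degree case, where one loses the uniformity in $(e_Q,f_Q,g_Q)$ that makes the Galois computation clean; the crude device of isolating a single ramified place above each $Q$ and absorbing the resulting loss into the constant $1/M$ sidesteps the lack of structure.
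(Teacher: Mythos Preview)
Your proof is correct. Note, however, that the paper does not actually give its own proof of this proposition: it is quoted as a result from \cite{CTcubic}, so there is no in-paper argument to compare against directly.

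That said, your approach is entirely in the spirit of the paper's framework. The computation you carry out is essentially the verification of conditions \eqref{thm3.2-a} and \eqref{thm3.2-b} of Remark~\ref{remarkdivisor} for the divisor $B_i=\sum_{Q\mid P_i}Q$ in $F_i$: the unramified hypothesis gives $\deg B_i\geq[F_i:F_0]$, which is condition~(a) with $c_1(i)=1$, and your Galois and bounded-degree estimates are precisely condition~(b) with $c_2(i)=1/2$ and $c_2(i)=1/M$ respectively. Indeed, the same two estimates appear verbatim in the paper's proof of Proposition~\ref{reciprocalremark3.2}, so your argument matches the paper's methodology even though the specific statement is outsourced to a reference.
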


Recall that a tower $\m{F}=(F_0, F_1, \ldots)$ of function fields over a perfect field $K$ of positive characteristic is called \emph{tame} if the ramification index $e(Q|P)$ of any place $Q$ of $F_{i+1}$ lying above a place $P$ of $F_i$ is relatively prime to the characteristic of $K$ for all $i\geq 0$. In the case of a tame tower the Dedekind's different theorem implies that $d(Q|P)\leq e(Q|P)$. Thus if $K$ is algebraically closed every place is of degree one, all the inertia degrees are one and then we have
\begin{align*}
\deg \diff(F_{i+1}/F_i) &=\sum_{P\in R_i}\underset{Q|P}{\sum_{Q\in\P(F_{i+1})}}d(Q|P)\deg(Q)\\
& \leq \sum_{P\in R_i}\underset{Q|P}{\sum_{Q\in\P(F_{i+1})}}e(Q|P)\leq[F_{i+1}:F_i]r_i\,,
\end{align*}
where $R_i=\{P\in\P(F_i)\,:\,P\text{ is ramified in } F_{i+1}\},$ and $r_i=\#(R_i)$. Therefore we clearly have the following result which states that the genus of some particular class of tame towers $\m{F}=(F_0, F_1, \ldots)$ depends only on the number of ramified places from $\bar{F}_i$ to $\bar{F}_{i+1}$ where $\bar{F_i}=F_i\cdot K'$ and $K'$ is an algebraic closure of $K$.

\begin{proposition}\label{genustame}
  Let $\m{F}=(F_0, F_1, \ldots)$ be a tame tower of function fields over a perfect field $K$ of positive characteristic satisfying the conditions of Proposition \ref{propmajwulf}.  Let
\[R_i=\{P\in\P(\bar{F}_i)\,:\,P \text{ is ramified in } \bar{F}_{i+1}\}\,,\]
where $\bar{F_i}=F_i\cdot K'$ with $K'$ an algebraic closure of $K$  and let $r_i=\#(R_i)$. Then
\begin{equation}\label{ineqgenustame}
g(F_0)-1+\frac{1}{2M}\sum_{i=0}^\infty\frac{r_i}{[F_i:F_0]}\leq \gamma(\m{F})\leq g(F_0)-1+\frac{1}{2}\sum_{i=0}^\infty\frac{r_i}{[F_i:F_0]}\,.
\end{equation}
\end{proposition}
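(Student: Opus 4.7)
The plan is to deduce both inequalities in \eqref{ineqgenustame} from the exact formula established in Proposition~\ref{propgenusdiff}, applied not to $\m{F}$ itself but to the constant field extension $\bar{\m{F}}=(\bar{F}_0,\bar{F}_1,\ldots)$. Since $K$ is perfect and is the full constant field of each $F_i$, the extension $\bar{F}_i/F_i$ is a constant field extension, so $g(\bar{F}_i)=g(F_i)$, $[\bar{F}_{i+1}:\bar{F}_i]=[F_{i+1}:F_i]$, and hence $\gamma(\bar{\m{F}})=\gamma(\m{F})$. Consequently, applying Proposition~\ref{propgenusdiff} to $\bar{\m{F}}$ gives
\[\gamma(\m{F})=g(F_0)-1+\frac{1}{2}\sum_{i=0}^{\infty}\frac{\deg\diff(\bar{F}_{i+1}/\bar{F}_i)}{[\bar{F}_{i+1}:\bar{F}_0]},\]
and the whole task reduces to bounding $\deg\diff(\bar{F}_{i+1}/\bar{F}_i)$ above and below in terms of $r_i$ and $[F_{i+1}:F_i]$.

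The upper bound is exactly the computation carried out in the paragraph preceding the proposition, now legitimately performed over $\bar{F}_i$ because $K'$ is algebraically closed and all its places are rational with inertia degree one: tameness plus Dedekind's different theorem give $d(Q|P)\leq e(Q|P)-1<e(Q|P)$, so $\deg\diff(\bar{F}_{i+1}/\bar{F}_i)\leq [F_{i+1}:F_i]\,r_i$, and the summation produces the asserted right-hand inequality.

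For the lower bound I would use the sharper tame identity $d(Q|P)=e(Q|P)-1$ and split into the two sub-cases of Proposition~\ref{propmajwulf}. When each $F_{i+1}/F_i$ is Galois, so is $\bar{F}_{i+1}/\bar{F}_i$, so for every $P\in R_i$ all the ramification indices above $P$ coincide with a common value $e_P\geq 2$ and $P$ has exactly $[F_{i+1}:F_i]/e_P$ extensions; hence $\sum_{Q|P}(e(Q|P)-1)=[F_{i+1}:F_i](e_P-1)/e_P\geq [F_{i+1}:F_i]/2$. When $[F_{i+1}:F_i]\leq M$, I only need the weak estimate that at least one $Q|P$ has $e(Q|P)\geq 2$, which gives $\sum_{Q|P}(e(Q|P)-1)\geq 1\geq [F_{i+1}:F_i]/M$. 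In both situations (taking $M\geq 2$ in the Galois case), I obtain $\deg\diff(\bar{F}_{i+1}/\bar{F}_i)\geq r_i[F_{i+1}:F_i]/M$, and dividing by $[\bar{F}_{i+1}:\bar{F}_0]=[F_{i+1}:F_i][F_i:F_0]$ yields the lower bound.

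The main obstacle, which is really more a matter of careful bookkeeping than a conceptual difficulty, is the reduction to $\bar{F}_i$: one must verify that the constant-field extension preserves the separability, degree and genus data entering Proposition~\ref{propgenusdiff}, and that the ramified-place set $R_i$ defined over $\bar{F}_i$ interacts correctly with the two hypotheses of Proposition~\ref{propmajwulf}. Once that is in place, the rest of the proof is a one-line case analysis on $\sum_{Q|P}(e(Q|P)-1)$ together with the tame different identity.
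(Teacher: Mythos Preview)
Your proposal is correct and matches the paper's approach. The paper does not write out a separate proof of this proposition; it simply states that the result ``clearly'' follows from the different computation carried out in the paragraph immediately preceding the statement (your upper bound), while the lower-bound case analysis you give --- $\sum_{Q|P}(e(Q|P)-1)\geq [F_{i+1}:F_i]/2$ in the Galois case and $\geq 1\geq [F_{i+1}:F_i]/M$ in the bounded-degree case --- is exactly what the paper spells out in the proof of the next proposition (Proposition~\ref{reciprocalremark3.2}). Your reduction to the algebraic closure and invocation of Proposition~\ref{propgenusdiff} are likewise implicit in the paper's discussion.
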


Now we  prove the following partial converse of the situation described in Remark \ref{remarkdivisor}.
\begin{proposition}\label{reciprocalremark3.2}
  Let $\m{F}=(F_0, F_1, \ldots)$ be a tame tower of function fields over $K$ of positive characteristic such that $\gamma(\m{F})=\infty$. Suppose that $\m{F}$ satisfies the conditions of Proposition \ref{propmajwulf}. Then for each $i\geq 1$ there exist a divisor  $B_i\in \m{D}(F_i)$ and functions $c_1(t)$ and $c_2(t)$ such that conditions \eqref{thm3.2-a} and \eqref{thm3.2-b} of Remark \ref{remarkdivisor} hold and the series $\sum_{i=1}^{\infty}c_1(i)c_2(i)$ is divergent.
\end{proposition}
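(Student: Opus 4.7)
The strategy is to take $B_i$ to be the ramification locus of $F_{i+1}/F_i$, viewed as a divisor of $F_i$, and then to split the identity $\sum_{P\in R_i}\sum_{Q\mid P}d(Q\mid P)\deg Q=\deg\diff(F_{i+1}/F_i)$ into two factors matching conditions \eqref{thm3.2-a} and \eqref{thm3.2-b}. The divergence of $\sum c_1(i)c_2(i)$ will then follow directly from Proposition~\ref{propgenusdiff}.

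More concretely, since $\gamma(\m{F})=\infty$, Proposition~\ref{propgenusdiff} tells us that $\sum_{i\ge 0}\deg\diff(F_{i+1}/F_i)/[F_{i+1}:F_0]$ diverges. For each $i\ge 1$ at which $F_{i+1}/F_i$ has at least one ramified place, I would set
$$R_i=\{P\in\P(F_i):P\text{ is ramified in }F_{i+1}\},\qquad B_i=\sum_{P\in R_i}P,$$
so that $\deg B_i>0$ and, since $d(Q\mid P)=0$ whenever $Q\mid P$ is unramified,
$$\sum_{P\in\supp(B_i)}\sum_{Q\mid P}d(Q\mid P)\deg Q=\deg\diff(F_{i+1}/F_i).$$
I would then take
$$c_1(i)=\frac{\deg B_i}{[F_i:F_0]},\qquad c_2(i)=\frac{\deg\diff(F_{i+1}/F_i)}{[F_{i+1}:F_i]\,\deg B_i},$$
which turns \eqref{thm3.2-a} and \eqref{thm3.2-b} into equalities and produces $c_1(i)c_2(i)=\deg\diff(F_{i+1}/F_i)/[F_{i+1}:F_0]$, i.e.\ the general term of the divergent series above. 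The tameness assumption enters only to make $c_2(i)$ well behaved: by Dedekind's different theorem $d(Q\mid P)\le e(Q\mid P)$, and combined with $\sum_{Q\mid P}e(Q\mid P)\deg Q=[F_{i+1}:F_i]\deg P$ this gives $\deg\diff(F_{i+1}/F_i)\le[F_{i+1}:F_i]\deg B_i$, so $c_2(i)\in(0,1]$, which is precisely the inequality already used just before Proposition~\ref{genustame}.

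The only obstacle I anticipate is cosmetic: at indices where $F_{i+1}/F_i$ is unramified, $R_i=\emptyset$ and the definition of $c_2(i)$ degenerates. Since the series of Proposition~\ref{propgenusdiff} still diverges, there must be infinitely many ramified indices, so one can either restrict to that subtower (via Remark~\ref{remteoclasi}) or, at the unramified indices, set $B_i$ to be any fixed place of $F_i$ and declare $c_1(i)=c_2(i)=0$; either way the divergence of $\sum c_1(i)c_2(i)$ is preserved. The hypothesis inherited from Proposition~\ref{propmajwulf} does not appear to play a role in the construction itself; it is carried along only to keep the statement within the framework of the surrounding results.
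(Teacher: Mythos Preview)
Your argument is correct and in fact cleaner than the paper's. Both proofs take the same divisor $B_i=\sum_{P\in R_i}P$, but they diverge in how $c_1$ and $c_2$ are chosen. You feed the whole quantity $\deg\diff(F_{i+1}/F_i)$ into $c_2(i)$, so that $c_1(i)c_2(i)$ reproduces the Hurwitz term of Proposition~\ref{propgenusdiff} exactly, and divergence is immediate from that proposition. The paper instead routes through Proposition~\ref{genustame}: it first passes to the algebraic closure, uses the right-hand inequality of \eqref{ineqgenustame} to deduce that $\sum r_i/[F_i:F_0]$ diverges, and then takes $c_1(i)=r_i/[F_i:F_0]$ together with a \emph{constant} $c_2$ (namely $1/2$ in the Galois case and $1/M$ in the bounded-degree case), verified by a direct lower bound on the different over each ramified place. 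That case split is precisely where the hypothesis of Proposition~\ref{propmajwulf} is consumed. Your route avoids both the constant-field extension and the case analysis, and as you correctly observe it does not actually use the hypothesis of Proposition~\ref{propmajwulf}; even tameness is only invoked to note that $c_2(i)\le 1$, which the statement does not require. What the paper's choice buys is an explicit absolute constant for $c_2$, closer in spirit to how Remark~\ref{remarkdivisor} is applied in the examples; your $c_2(i)$ is tower-dependent but the logic is tighter. The cosmetic difficulty at unramified indices is shared by the paper's proof, which likewise sets $B_i=\sum_{P\in R_i}P$ and $c_1(i)=r_i/[F_i:F_0]$ without singling out the case $r_i=0$.
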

\begin{proof}
We may assume that $K$ is algebraically closed. Let $R_i$ be as in Proposition \ref{genustame} and let $r_i=\#(R_i)$. Since $\gamma(\m{F})=\infty$ then by \eqref{ineqgenustame} we have that the series \[\sum\limits_{i=0}\limits^\infty\frac{r_i}{[F_i:F_0]}\,,\] is divergent. Let $$B_i=\sum_{P\in R_i}P.$$
We have $$\deg B_i=\sum_{P\in R_i}\deg P \geq r_i=c_1(i)[F_i:F_0]\,,$$
  where $c_1(i)= r_i[F_i:F_0]^{-1}$.

If each extension $F_{i+1}/F_i$ is Galois, then condition~\eqref{thm3.2-b} of Remark \ref{remarkdivisor} holds with $c_2(i)=1/2$ because we have
  $$\sum_{P\in \supp{B_i}}\underset{Q|P}{\sum_{Q\in\P(F_{i+1})}}d(Q|P)\deg Q\geq \frac{1}{2} [F_{i+1}:F_i]\deg B_i.$$
If $[F_{i+1}:F_i]\leq M$ for some positive constant $M$ and for all $i\geq 0$, then condition~\eqref{thm3.2-b} of Remark \ref{remarkdivisor} holds with $c_2(i)=1/M$ because we obtain
  $$\sum_{P\in \supp{B_i}}\underset{Q|P}{\sum_{Q\in\P(F_{i+1})}}d(Q|P)\deg Q \geq \deg B_{i}\geq \frac{1}{M}[F_{i+1}:F_i]\deg B_{i}.$$
Hence, since $M\geq 2$, we have
\[\sum_{i=1}^{\infty} c_1(i)c_2(i)\geq \frac{1}{M}\sum_{i=1}^{\infty}\frac{r_i}{[F_i:F_0]}=\infty\,,\]
as desired.
\end{proof}

\section{Examples}\label{examples}
In some examples of this section we shall use the following convention: a place defined by a monic and irreducible polynomial $f\in K[T]$ in a rational function field $K(x)$ will be denoted by $P_{f(x)}$.
\begin{example}
 Let $\m{F}=(F_0, F_1, \ldots)$ be a non-skew recursive tower of function fields over a perfect field $K$ of characteristic $p$ defined by the polynomial
\[H=(T^{p+1}+T)f(S)-S^{p+1}\in K[S,T]\,,\]
where $f\in K[S]$ is a polynomial of degree $p+1-r$ with $\gcd(p+1,r)=1$ and such that $f(0)\neq 0$. 
The basic function field is $K(x,y)$ with
\[y^{p+1}+y=\frac{x^{p+1}}{f(x)}\,.\]


Let $Q$ be a zero of $y$ in $K(x,y)$. Then $Q$ lies above $P_y$ and, since $\nu_Q(y)>0$, we have
\[\nu_Q(y^{p+1}+y)=\nu_Q(y)+\nu_Q(y^p+1)=\nu_Q(y)\,,\]
and if $P=Q\cap K(x)$ then
\[0<\nu_Q(y^{p+1}+y)=e(Q|P)((p+1)\nu_P(x)-\nu_P(f(x)))\,,\]
Therefore $\nu_P(x)>0$ (the inequality $\deg f<p+1$ is used to rule out the possibility $\nu_P(x)<0$) and then $P=P_x$ which implies that $Q$ lies over $P_x$. Now let $Q'$ be a zero of $y^p+1$ in $K(x,y)$ and let $R=Q'\cap K(x)$. Then $\nu_{Q'}(y)=0$ and
\[0<\nu_{Q'}(y^p+1)=\nu_{Q'}(y^{p+1}+y) =e(Q'|R)((p+1)\nu_R(x)-\nu_R(f(x)))\,.\]
This implies that $\nu_R(x)>0$  showing that $R=P_x$ so that $Q'$ lies above $P_x$. Then $\nu_{P_x}(f(x))=0$ and using that we are in characteristic $p$ and that $\nu_{Q'}(y)=0$ we have
\[p\nu_{Q'}(y+1)=\nu_{Q'}(y^{p+1}+y)=(p+1)e(Q'|P_x)\nu_{P_x}(x)=(p+1)e(Q'|P_x)\,.\]
Since $[K(x,y):K(x)]=p+1$, the above equality forces to have $e(Q'|P_x)=p$ and then $e(Q|P_x)=1$.

On the other hand, since $Q$ is a place of $K(x,y)$ lying above $P_y$, we have
\[(p+1)\nu_Q(x)-\nu_Q(f(x))=e(Q|P_y)(\nu_{P_y}(y)+\nu_{P_y}(y^p+1))=e(Q|P_y)\geq 1\,,\]
and this implies that $\nu_Q(x)>0$. Hence $\nu_Q(f(x))=0$ and then
\[p+1\leq (p+1)\nu_Q(x)=e(Q|P_y)\leq p+1\,,\]
so that $e(Q|P_y)=p+1$. Summarizing we have that if $Q$ is a zero of $y$ in $K(x,y)$ then $Q$ lies above $P_y$ in $K(y)$ and above $P_x$ in $K(x)$, $e(Q|P_y)=p+1$ (hence $P_y$ is tamely and totally ramified in $K(x,y)$ and $\gcd(e(Q|P_y),p)=1$), $e(Q|P_x)=1$ (hence $\gcd(e(Q|P_y),e(Q|P_x))=1$) and if $Q'$ is  a zero of $y^p+1$ in $K(x,y)$ then  $Q'$ lies above $P_x$ in $K(x)$ and $e(Q'|P_x)=p$ so that $Q'|P_x$ is wildly ramified and $\gcd(e(Q'|P_x),e(Q|P_y))=1$.

We have thus that $\m{F}$ is, in fact, an asymptotically bad tower over $K$. As we mentioned in the previous section, in \cite{GS96}  a particular case of this example was presented with $p=2$ and $f(S)=S+1$.

\end{example}

\begin{example}\label{ejemploBGS04}
We shall show now that the result proved in \cite[Theorem 2.1]{BGS04} on asymptotically bad towers of Artin-Schreier type can be deduced from Corollary \ref{coroTeoclasificacion1}. Let $\m{F}=(F_0, F_1, \ldots)$ be an Artin-Schreier tower of function fields over a perfect field $K$ of characteristic $p>0$ defined by the polynomial
\[H=(T^p-T)b_2(S)-b_1(S)\in K[S,T]\,,\]
where $b_1$, $b_2\in K[S]$ are coprime polynomials of degree $\deg{b_1}=p$ and $\deg{b_2}=r<p$ with $\gcd(r,p)=1$. Let $b(S)=b_1(S)/b_2(S)$ and let $\{x_i\}_{i=0}^{\infty}$ be a sequence of transcendental elements over $K$ such that
\[F_0=K(x_0)\quad\text{and}\quad F_{i+1}=F_i(x_{i+1})\,,\]
with $x_{i+1}^p-x_{i+1}=b(x_i)$ for $i\geq 0$. Suppose that the set $$L_0=\{P\in Supp((b(x_0))_\infty^0): \gcd(v_P(b(x_0)),p)=1\}\subset \P(F_0)\,,$$ is non-empty and let $L_j=\{P'\in \P(F_j):P'|P\text{ for some }P\in L_{j-1}\}$, for $j\geq 1$. Finally suppose that there is a constant $C>0$ such that for infinitely many indices $0\leq r_1<r_2<\cdots$ we have that $$\deg{B_{r_j}}\geq C\cdot \deg(b(x_{r_j}))_\infty^{r_j}\,,$$ where $$B_{r_j}:=\sum_{P\in L_{r_j}} -v_{P}(b(x_{r_j}))P\quad\in \m{D}(F_{r_j})\,.$$ We claim that $\m{F}$ is, in fact, an asymptotically bad tower. To see this as a consequence of Corollary \ref{coroTeoclasificacion1}, first we prove that $$L_j\subset\{P\in \P(F_j): v_P(b(x_{j}))<0 \quad\text{and}\quad\gcd(v_P(b(x_{j})),p)=1\},$$ for all $j\geq 0$.
By definition, we know that $$L_0=\{P\in Supp((b(x_0))_\infty^0): \gcd(v_P(b(x_0)),p)=1\}\subset \P(F_0).$$
Let $P$ be a place in $L_0$, $Q$ a place of $F_1$ lying above $P$ and $z_j=b(x_{j-1})$ for $j\geq 1$. Since $P$ is a pole of $b(x_0)$ we have  $$v_Q(x_1^p-x_1)=e(Q|P)v_P(b_1(x_0)/b_2(x_0))=e(Q|P)v_P(z_1)<0\,,$$
and this implies that $v_Q(x_1)<0$. Hence
$$v_Q(z_2)=v_Q(b(x_1))=v_Q(b_1(x_1))-v_Q(b_2(x_1))=(p-r)v_Q(x_1)<0\,,$$
and $$\gcd(v_Q(z_2),p)=\gcd((p-r)v_Q(x_1),p)=1.$$

Now we proceed by induction. Assume that $$L_j\subset\{P\in \P(F_j): v_P(z_{j+1})<0\quad \text{and }\quad\gcd(v_P(z_{j+1}),p)=1\}\,,$$ for $j\leq k$. Let $Q$ be place in $L_k$ and let $R$ be a place of $F_{k+1}$ lying above $Q$. Then $$v_R(x_{k+1}^p-x_{k+1})=e(R|Q)v_Q(b(x_k))=e(R|Q)v_Q(z_{k+1})<0\,,$$ and again we have that $v_R(x_{k+1})<0.$ Hence $$v_Q(z_{k+2})=(p-r)v_Q(x_{k+1})<0\,,$$ and $$\gcd(v_Q(z_{k+2}),p)=1\,.$$

We claim now that if $P\in L_{j}$ then for some $u\in F_j$ we have that $$v_P(z_{j+1}-(u^p-u))=-m<0\,,\qquad\text{with}\qquad m\not\equiv 0 \mod p\,.$$ To prove the claim we argue as follows: if $u\in F_j$ is such that $v_P(u^p-u)=v_P(z_{j+1})$ then $0>v_P(u^p-u)$ so that $v_P(u)<0$. Hence $v_P(z_{j+1})=v_P(u^p-u)=p\,v_P(u)$ which contradicts the assumption $\gcd(v_P(z_{j+1}),p)=1$. Therefore, for all $u\in F_j$ we have $v_P(z_{j+1})\neq v_P(u^p-u)$. By the so called strict triangular inequality (see \cite[Lemma 1.1.11]{Stichbook09}) we get $$v_P(z_{j+1}-(u^p-u))\leq v_P(z_{j+1})<0\,,$$ for any $u\in F_j$.  From this inequality and Lemma 3.7.7 in \cite{Stichbook09} we have that there exists $u\in F_j$ such that $$v_P(z_{j+1}-(u^p-u))=-m<0\qquad\text{ with }\qquad m\not\equiv 0 \mod p\,,$$ as claimed.

Finally, we claim that if $P\in L_{r_j}$, $Q \in L_{r_{j+1}}$ and $Q|P$ then
 \begin{equation}\label{ecunueva}
d(Q|P)\geq \frac{1}{2}(-v_{P}(b(x_{r_j})))[F_{r_{j+1}}:F_{r_j}]\,,
\end{equation} for all $j\geq 1$.
To see this note that if $L_{r_{j+1}}/L_{r_j}$ is an Artin-Schreier extension, then
 \begin{align*}
d(Q|P)&\geq (-v_{P}(b(x_{r_j}))+1)([F_{r_{j+1}}:F_{r_j}]-1)\,,\\
&\geq \frac{1}{2}(-v_{P}(b(x_{r_j})))[F_{r_{j+1}}:F_{r_j}]\,,\\
\end{align*}
by the general theory of Artin-Schreier extensions (see \cite[Theorem 3.7.8, (c)]{Stichbook09}).
If not we have that  $L_{r_{j}+2}/L_{r_{j}+1}$ and  $L_{r_{j}+1}/L_{r_j}$ are Artin-Schreier extensions and
if $P^1=Q\cap F_{r_{j}+1}$ and $P^2=Q\cap F_{r_{j}+2}$ then we have that
 \begin{align*}
d(P^1|P)&\geq \frac{1}{2}(-v_{P}(b(x_{r_j})))[F_{r_{j}+1}:F_{r_j}]\,,
\end{align*}
and also that $$e(P^2|P^1)=[F_{r_{j}+2}:F_{r_{j}+1}].$$
Now by the transitivity of the different exponent we get
  \begin{align*}
d(P^2|P)&\geq e(P^2|P^1)d(P^1|P)\\
&\geq [F_{r_{j}+2}:F_{r_{j}+1}]\frac{1}{2}(-v_{P}(b(x_{r_j})))[F_{r_{j}+1}:F_{r_j}]\\
&=\frac{1}{2}(-v_{P}(b(x_{r_j})))[F_{r_{j}+2}:F_{r_j}]\,.
\end{align*}
If $L_{r_{j+1}}=L_{r_{j}+2}$ we are done. If not, since $L_{r_{j}+3}/L_{r_{j}+2}$ is an Artin-Schreier extension, then we have that
 $$e(P^3|P^2)=[F_{r_{j}+3}:F_{r_{j}+2}]\,,$$ where $P^3=Q\cap F_{r_{j}+3}$.
 Therefore,
 \begin{align*}
d(P^3|P)&\geq e(P^2|P^1)d(P^1|P)\\
&=\frac{1}{2}(-v_{P}(b(x_{r_j})))[F_{r_{j}+3}:F_{r_j}]\,.
\end{align*}
Again if  $L_{r_{j+1}}=L_{r_{j}+3}$ we are done. If not, by continuing with an inductive argument we see that que claim holds.

Therefore using \eqref{ecunueva} we have
\begin{align*}
  \sum_{P\in supp(B_{r_j})}\sum_{P'|P}d(P'|P)\deg P'
  &\geq \frac{1}{2}[F_{r_{j+1}}:F_{r_j}] \sum_{P\in supp(B_{r_j})}-v_{P}(b(x_{r_j})) \deg P\\
  &=\frac{1}{2}[F_{r_{j+1}}:F_{r_j}] \deg{B_{r_j}}\,,
 \end{align*} and we conclude that $\gamma(\m{F})=\infty$  from Corollary \ref{coroTeoclasificacion1} because in this case $c_1(j)=C$ and $c_2(j)=1/2$ for all $j\geq 0$.
 \end{example}

\begin{example}
Let $\m{F} = (F_0, F_1, \ldots)$ be a non-skew recursive tower of function fields over $K$ defined by a polynomial $H(S,T)\in K[S,T]$. Following \cite{BeelenGS05}, we define its dual tower $\m{G} =  (G_0, G_1, \ldots)$ as the recursive tower defined by $H(T,S)$. We identify the rational function
field $F_0 = K(x_0)$ with $G_0 = K(y_0)$ by setting $x_0 = y_0$. Then we
have that $F_0=G_0$ and
\begin{align*}
  F_n = K(x_0,\ldots, x_n) &\quad \text{with } H(x_i, x_{i+1}) = 0, \text{ and }\\
  G_n = K(y_0,\ldots, y_n) &\quad \text{with } H(y_{i+1}, y_i) = 0
\end{align*} for all $n \geq 1$ and $0 \leq i \leq n - 1$. Note that the function fields $F_i$ and $G_i$ are $K$-isomorphic for all $i\geq 0$.

For $P\in \P(F_0)= \P(G_0)$ we define the set
$$\epsilon(P,\m{F}):=\sup_{n\geq 1}\{e(Q_n|P):Q_n \in \P(F_n)\text{ and }Q_n|P\}$$   We will show that if
 $$\epsilon(P,\m{F})\neq \epsilon(P,\m{G})\,,$$
then $\gamma(F)=\infty$ so that $\m{F}$ is actually a tower which is asymptotically bad.
 In order to prove this, we follow the proof given in \cite{BeelenGS05} by considering  $\m{F}$ as a tower over an algebraic closure $K'$ of $K$ (recall that the genus of a tower and the ramification indices do not change in constant field extensions). Hence any place of each $F_i$ is of degree one. We also have that
$$[F_{n+1}:F_n] = [G_{n+1}:G_n] = m\,,$$
for all $n\geq 1$ where $1<m=\deg_SH = \deg_T H$.

Without loss of generality we can assume that $\epsilon(P,\m{F}) > \epsilon(P, \m{G})$. Then $e_1=
\epsilon(P,\m{G})$ is a positive integer. By definition of $\epsilon(P, \m{G})$ there exist a positive integer $n$ and a place $Q_1 \in \P(G_n)$ such that
\begin{enumerate}[(i)]
\item $e(Q_1|P) = e_1$.
\item $Q_1$ splits completely in $G_l/G_n$ for all $l\geq n$.
\end{enumerate}

Since  $\epsilon(P,\m{F}) > \epsilon(P, \m{G})$ there exists a positive integer $k$ such that there is a place $Q_2 \in \P(F_k)$ lying above $P$ with $$e_2= e(Q_2|P) > e_1\,.$$
Let $l\geq n$ and let $H_l:=F_k\cdot G_l$ (resp. $H_n:= F_k\cdot G_n$) be the composite
field of $F_k$ with $G_l$ (resp. with $G_n$). Consider a place $R_1 \in \P(G_l)$ lying above
the place $Q_1$ (see Figure \ref{figu8}).

 \begin{figure}[h!t]
 \begin{center}
  \begin{tikzpicture}[scale=1]
   \draw[line width=0.5 pt](0,6)--(4,0)--(7,4.5)--(6,6)--(3,1.5);
    \draw[line width=0.5 pt](6,3)--(5,4.5);
     \draw[line width=0.5 pt](8,1.5)--(9,0)--(12,4.5)--(11,6)--(8,1.5);
    \draw[line width=0.5 pt](11,3)--(10,4.5);
        \draw[white, fill=white](4,0) circle (0.3 cm);
            \draw[white, fill=white](3,1.5) circle (0.3 cm);
                \draw[white, fill=white](1,4.5) circle (0.3 cm);
                    \draw[white, fill=white](0,6) circle (0.3 cm);
                        \draw[white, fill=white](5,4.5) circle (0.3 cm);
                            \draw[white, fill=white](6,3) circle (0.3 cm);
                              \draw[white, fill=white](7,4.5) circle (0.3 cm);
                              \draw[white, fill=white](6,6) circle (0.3 cm);
   \draw[white, fill=white](9,0) circle (0.3 cm);
            \draw[white, fill=white](8,1.5) circle (0.3 cm);
                        \draw[white, fill=white](10,4.5) circle (0.3 cm);
                            \draw[white, fill=white](11,3) circle (0.3 cm);
                              \draw[white, fill=white](12,4.5) circle (0.3 cm);
                              \draw[white, fill=white](11,6) circle (0.3 cm);
                                \node at(4,0){$F_0=G_0$};
\node at(3,1.5){$F_k$};
 \node at(1,4.5){$F_l$};
  \node at(0,6){$F_{l+k}$};
 \node at(6,3){$G_n$};
 \node at(7,4.5){$G_l$};
 \node at(5,4.5){$H_n=F_k\cdot G_n$};
 \node at(6,6){$H_l=F_k\cdot G_l$};
   \node at(9,0){$P$};
\node at(8,1.5){$Q_2$};
 \node at(11,3){$Q_1$};
 \node at(12,4.5){$R_1$};
 \node at(10,4.5){$R_2$};
 \node at(11,6){$S_1$};
  \node at(5.5,1.5){\scriptsize{$m^n$}};
  \node at(7,3.7){\scriptsize{$m^{l-n}$}};
   \node at(6.9,5.4){\scriptsize{$m^{k}$}};
   \node at(10.5,1.5){\scriptsize{$e_1$}};
  \node at(11.7,3.7){\scriptsize{$1$}};
   \node at(12,5.3){\scriptsize{$e>1$}};
   \node at(8.3,.7){\scriptsize{$e_2$}};
   \node at(10.9,4){\scriptsize{$e>1$}};
  \end{tikzpicture}
  \caption{Ramification of $P$ in $\m{F}$ and $\m{G}$.}\label{figu8}
\end{center}\end{figure}
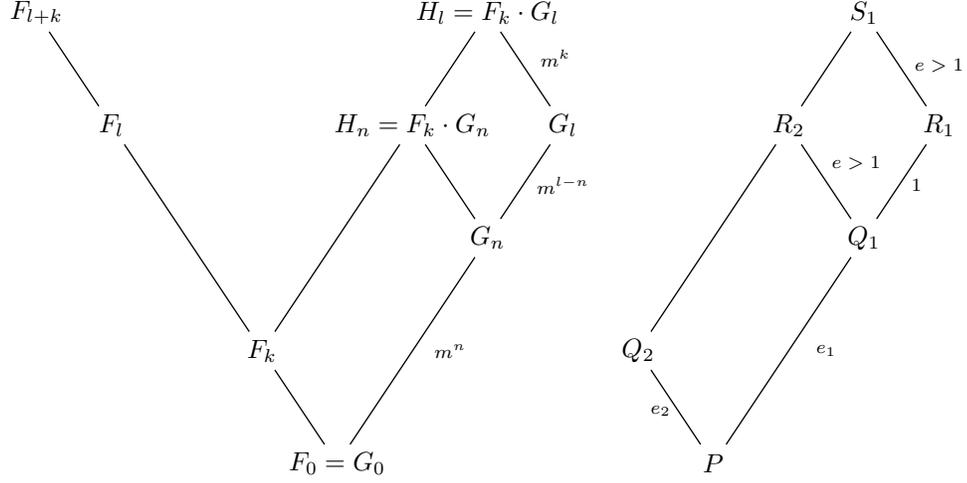

 From \cite[Lemma 2.1]{MaWu05} there is a place $R_2\in \P(H_n)$ lying above $Q_1$ and $Q_2$. Since $e_2>e_1$ we have $e(R_2|Q_1)>1$. Again by \cite[Lemma 2.1]{MaWu05} there exists a place $S_1\in \P(H_l)$ lying above $R_1$ and $R_2$. It follows that $e(S_1|R_1)=e(R_2|Q_1)>1$. Since $$\#\{R_1 \in \P(G_l):R_1|Q_1\}=[G_l:G_n]=m^{l-n}$$
we conclude that there are, at least, $m^{l-n}$ places of $G_l$ which ramify in $H_l$. On the other hand since $H_l=K'(x_k,\ldots,x_1,x_0=y_0,y_1\ldots,y_l)$ we see that $H_l$ is isomorphic to $F_{l+k}$ by means of the map $\sigma$ sending $x_i$ to $y_{l+i}$ for $0\leq i\leq k$ and $y_j$ to $x_{l-j}$ for $1\leq j\leq l$. Since  this map restricted to $G_l=K'(y_0,\ldots,y_l)$ gives $F_l=K'(x_0,\ldots,x_l)$ we have that if a place $P$ of $G_l$ ramifies in $H_l$ then $\sigma(P)$ is a place of $F_l$ which is ramified in $F_{l+k}$. Therefore  at least $m^{l-n}$ places of $F_l$ ramify in $F_{l+k}$ for $l\geq n$.

Now let us consider the sequence $\{r_j\}_{j\geq 0}$ where $r_j=n+jk$. We know that in each extension $F_{r_j}$ there are at least $$m^{r_j-n}=
m^{n+jk-n}=m^{jk}$$ places, $P_1, P_2, \ldots, P_{m^{jk}}$, which ramify in the extension $F_{r_{j+1}}$. Let
$$B_{r_j}=\sum_{i=1}^{m^{jk}}P_i.$$
Then
$$\deg{B_{r_j}}=\sum_{i=1}^{m^{jk}}\deg P_i=m^{jk}\geq [F_{r_{j}}:F_0]\,,$$
which is \eqref{thm3.2-a} of Remark~\ref{remarkdivisor} with $c_1(j)=1$.
Also \begin{align*}
  \sum_{P\in \supp{B_{r_j}}}\sum_{P'|P} d(P'|P)\deg{P'}&\geq \sum_{i=1}^{m^{jk}}(e(P'|P)-1)\\
  &\geq m^{jk}\\
  &=\frac{1}{m^{k}}[F_{r_{j+1}}:F_{r_j}]\deg{B_{r_j}}\,,
\end{align*}
so that $c_2(j)=m^{-k}$ in {(b)} of Remark~\ref{remarkdivisor} and then $\m{F}$, as a tower over $K'$, has infinite genus by Remark~\ref{remarkdivisor}. Therefore  $\m{F}$ has infinite genus as a tower over $K$ and then $\m{F}$ is, in fact, an asymptotically bad tower over $K$.
\end{example}

\bibliographystyle{amsplain}

\end{document}